\newcommand{\N}{\mathbb N}
\newcommand{\R}{\mathbb R}
\newtheorem{Theorem}{Theorem}
\newtheorem{Proposition}{Proposition}
\theoremstyle{definition}
\newtheorem{Definition}{Definition}
\newtheorem{Remark}{Remark}
\def\bal#1\eal{\begin{align}#1\end{align}}              
\def\baln#1\ealn{\begin{align*}#1\end{align*}}          
\def\bml#1\eml{\begin{multline}#1\end{multline}}        
\def\bmln#1\emln{\begin{multline*}#1\end{multline*}}  
\def\bga#1\ega{\begin{gather}#1\end{gather}}
\def\bgan#1\egan{\begin{gather*}#1\end{gather*}}
\newcommand{\beq}{\begin{equation}}
\newcommand{\eeq}{\end{equation}}
\newcommand{\bere}{\begin{Remark}}
\newcommand{\ere}{\end{Remark}}
\newcommand{\bpr}{\begin{Proposition}}
\newcommand{\epr}{\end{Proposition}}
\def\br#1\er{\textcolor{black}{#1}} 
\begin{document}

\title[Analyticity of static solutions  of a field equation in Finsler gravity]{On the analyticity of static solutions  of a field equation  in Finsler gravity}
\author[E. Caponio]{Erasmo Caponio}
\address{Department of Mechanics, Mathematics and Management, \hfill\break\indent
	Politecnico di Bari, Via Orabona 4, 70125, Bari, Italy}
\email{caponio@poliba.it}

\author[A. Masiello]{Antonio Masiello}
\address{Department of Mechanics, Mathematics and Management, \hfill\break\indent
	Politecnico di Bari, Via Orabona 4, 70125, Bari, Italy}
\email{antonio.masiello@poliba.it}

%\date{\today}

\begin{abstract}
	It is well-known  that  static vacuum solutions of Einstein equations are  analytic in suitable coordinates. We ask here for an extension of this result in the context of Finsler gravity. We consider  Finsler spacetimes that retain several properties of static Lorentzian spacetimes, \br are  Berwald  \er and have vanishing Ricci scalar.
\end{abstract}
\maketitle

\section{Introduction}

In a couple of papers appeared in 1970~\cite{Muelle70,Mu70}, H. M\"uller zum Hagen  proved that  on any \br $C^3$  static or stationary spacetime which is a vacuum solution of the Einstein equations \er  there exists an appropriate analytic atlas such that \br the metric coefficients of the solution are \er also analytic. Our aim in this paper is to investigate if this result can be extended  to \br static \er Finsler spacetimes \br of Berwald type. \er 
This goal forces us to analyse  at least three~questions:

\begin{quote}

What is the convenient  definition of  a Finsler spacetime?
	
\noindent What the one of a static Finsler spacetime?
	
\noindent What the field equations extending  Einstein equations? %at least in absence of~matter.  	

\end{quote}

We will then consider each of the above  questions in the next three sections.  Section~\ref{sec5} will be devoted to the extension of  M\"uller zum Hagen's result to a static \br Berwald \er~spacetime.

\section{On the Definition of a Finsler~Spacetime}
The idea of replacing the Lorentzian norm  of a spacetime by a function  positively homogeneous on the velocities   goes back to G. Randers~\cite{Rander41}. He introduced a complex-valued norm 
%\beq\label{randers} 
$F(x,y)=\sqrt{h_x(y,y)} +\omega_x(y)$,
%\eeq
where $h$ is a Lorentzian metric and $\omega$ a one-form on a four-dimensional manifold $\tilde M$, that could  take more into account the asymmetries of the physical world, in~particular  the {\em ``uni-direction of timelike intervals''}. 
%When $g$ is Riemannian and the norm of $\omega$ w.r.t. $g$ is strictly less than $1$, $F$ gives a particular class  of  Finsler metrics that are  called  {\em Randers metric}. 
After decades,  Lorentz--Finsler  norms, eventually defined only on a  cone sub-bundle $ \mathcal A$ of $T\tilde M$ in order to avoid complex and/or negative values, appeared again in the work of G. S. Asanov (see~\cite{Asanov85} and the references therein) about general relativity and gauge field theory. Afterwards, they have been  considered in the study of multirefringence models  in optics~\cite{SkaVis11a},  in~the  classical limit of modified dispersion relations encompassing Lorentz violation in quantum gravity and in the Standard Model Extension (see, e.g.,~\cite{GiLiSi06, Vacaru11,  KoRuTs12, Russel15, Collad17}), in~ studies about  causality and superluminal signals (see, for~example,~\cite{ChLiWa12, PfeWoh12}). 
Actually,  Lorentz--Finsler norms  had already emerged   some years before  the work of Asanov  in  a paper by  H. Busemann~\cite{Busema67}, in~relation to the  local description  of  the spacetime according   to  an axiomatic definition called by the author  {\em timelike $G$-space}. Inspired by~\cite{Busema67}, J.K. Beem  in~\cite{Beem70} introduced the notion of an {\em indefinite  Finsler metric} as a non-reversible  fiberwise  positively homogeneous of degree 2 function $L=L(x,y)$, defined on the tangent bundle of $\tilde M$, whose fiberwise Hessian
\bal\label{fundtensor} 
\tilde g(x,y)[u,v]:=\frac 1 2 \frac{\partial^2}{\partial s\partial t}L(x,y+su+tv)|_{(s,t)=(0,0)}
\eal
$u,v\in T_x\tilde M$,  has index $1$ for all $x\in \tilde M$ and $y\in T_x\tilde M\setminus\{0\}$. This definition widely extends Lorentzian geometry, with~the fundamental  tensor $g$ replacing the  Lorentzian metric and the function \[F(x,\cdot):=\sqrt{|L(x,\cdot)|},\] 
giving a positively  homogeneous  Lorentz--Finsler norm at each tangent space $T_x\tilde M$. Notice that $F(x.\cdot)$ becomes  \br absolutely \er homogeneous if $L$ is reversible, i.e.,~$L(x,y)=L(x,-y)$.
Geodesics of $(\tilde M,L)$ connecting two points $p,q\in \tilde M$ can be defined as extremal curves  of the energy functional $\gamma\in \mathcal C_{pq}(\tilde M)\mapsto \int_0^1 L(\gamma,\dot\gamma)ds$, where $\mathcal C_{pq}(\tilde M)$ is the set of all the piecewise smooth curves $\gamma:[a,b]\to \tilde M$ such that $\gamma(a)=p$ and $\gamma(b)=q$. It is  soon realized that geodesics  must satisfy the conservation law $L(\gamma, \dot\gamma)=\mathrm{const.}$ and, as~a consequence,    world-lines of  freely falling particles are introduced \br kinematically \er as those timelike geodesics ($L(\gamma,\dot\gamma)<0$) parametrized with  $L(\gamma,\dot\gamma)=-1$.   
More recently, V. Perlick~\cite{Perlic06} used Beem's definition of the Finsler spacetime in order to extend Fermat's principle for light rays (i.e., geodesics satisfying $L(\gamma,\dot\gamma)=0$) between a point and a light source modelled  as a timelike curve. His Finslerian Fermat's principle recovers also some  results that had already appeared in the study of optics in an anisotropic medium and also  of  sound rays in an anisotropic elastic medium (see references in~\cite{Perlic06}). 

In~\cite{Beem70},  some $2$-dimensional examples of indefinite Finsler metrics $L$, reversible and not, are given where the set of lightlike vectors has more than two connected components.  E. Minguzzi~\cite{Minguz15} showed later that multiple light cones do not occur if $L$ is smooth on $T\tilde M\setminus 0$, reversible and  $\mathrm{dim}\,\tilde M\geq 3$. Anyway, for~non reversible and in particular for  functions  $L$ that are not smooth on the whole slit tangent bundle, multiple connected components are to be expected. This  fact   had led several authors to assume that only one of these connected components should be considered as a privileged one by the point of view of causality. The~choice can be done, e.g.,~by  prescribing a timelike, globally defined, vector field $Y$ and taking at each $x\in \tilde M$ the connected component which is the boundary of the set of timelike vectors containing $Y(x)$ (such as, for~example, in~\cite{GaPiVi12,Minguz15}) or by a priori restricting  $L$ to a cone sub-bundle  $\mathcal A$ of $T\tilde M$, like  in Asanov's   definition of a Finsler norm $F$ (such as, for~example, in~\cite{JavSan14a, AazJav16, Minguz17,Minguz17a}) or by looking only at the cone structure, without~considering as {\em fundamental} the  function $L$ (see~\cite{Fathi15, BerSuh18,  Minguz19, JavSan20}). In~some physical models, anyway,  indefinite Finsler metrics $L$  arise as the metrics invariant  under the action of the symmetry group considered and, in~general, they are  defined and smooth only on an open  cone sub-bundle of $T\tilde M$.  In~particular, this is the case of the Bogoslovsky metric (see, for~example,~\cite{BogGoe99, KoStSt09}). It was  observed that this  is  the metric that is preserved under the action of  the group of transformations of the so-called Very Special Relativity~\cite{GiGoPo07}.

Recently, a~definition of a Finsler spacetime  has been proposed~\cite{HoPfVo19} that encompasses definitions which  generalize Beem's one as those in~\cite{PfeWoh11, JavSan14a, AazJav16, Minguz17}.
The authors declare in~\cite{HoPfVo19} that their  definition does not include some classes of Finsler spacetimes studied   in~\cite{CapSta16, CapSta18} which can be seen as generalizations of standard static and stationary Lorentzian spacetimes  and that have already appeared in other papers~\mbox{\cite{Asanov98, LaPeHa12, SkaVis10, LiCha14}}. Thus,  it is worth  to relax slightly the definition in~\cite{HoPfVo19} in order to include~them.
\vspace{-3pt}
\begin{Definition}[Open  cone sub-bundle of $T\tilde M$]
	Let $\tilde M$ be a smooth connected manifold of dimension $n+1$ and $\tilde\pi: T\tilde M\to \tilde M$ its tangent bundle.  A~subset  $C\subset T\tilde M $ will be said an  {\em  open  cone sub-bundle} of $T\tilde M$, if~\vspace{-2pt}
	\begin{itemize}
		\item[(i)]$\tilde\pi(C)=\tilde M$;
		\item[(ii)] for all $x\in \tilde M$, $C_x:=T_x\tilde M\cap C$ is a pointed open  cone, i.e.,~$0\in C_x$, $C_x\setminus \{0\}$ is an open subset of $T_x\tilde M$ and  if $y\in C_x$ then $\lambda \br y\er\in C_x$ for each $\lambda> 0$; 
		\item[(iii)] $C_x$ varies smoothly with $x\in \tilde M$ meaning that $C_x\setminus \{0\}$ is defined by the union of the solutions of a finite number of  systems of inequalities  in the variable $y$
		\[
		\begin{cases}E_{1,k}(x,y)>0\\
		\ldots\\
		E_{m_k,k}(x,y)>0\end{cases}\]
		where, for~each $k\in\{1,\ldots,l\}$, $E_{1,k}, \ldots, E_{m_k,k}\colon T\tilde M\to \R$ are $m_k$ smooth functions on $T\tilde M\setminus 0$, positively homogeneous of degree $\alpha_{1,k}, \ldots, \alpha_{m_k,k}$ in $y$.
	\end{itemize}
	
	An open cone sub-bundle such that for all $x\in \tilde M$, $C_x$ is convex will be said a {\em  convex open cone sub-bundle}. Moreover, an~open cone sub-bundle without the zero section  will be called a {\em slit cone sub-bundle}. 
\end{Definition}
\begin{Remark}
	Notice  that we do not assume that $C_x$ is convex nor that it  is {\em salient}, i.e.,~that if $y\in C_x$ then $-y\not\in C_x$ (indeed  the open cone sub-bundle $C$ can be equal to $T\tilde M$). Being salient  is instead certainly true for a convex slit cone sub-bundle. Finally, notice that (iii) implies that the boundary of a fibre of an open cone sub-bundle is the union of a finite number of piecewise smooth hypersurfaces in $T_x\tilde M$. 
\end{Remark}

\begin{Definition}[Lorentz--Finsler metric and Finsler spacetimes]\label{LF}
	Let $C$ be an  open  cone sub-bundle of $T\tilde M$. A~{\em Lorentz--Finsler metric} on $\tilde M$ is a continuous function $L\colon C\to \R$ which~satisfies:\vspace{-3pt}
	\begin{itemize}
		\item[(i)] $L=L(x,y)$ is fiberwise  positively homogeneous of degree two, i.e. $L(x,\lambda y)$
		$= \lambda^2 L(x,y)$, for~all $x\in \tilde M$, $y\in C_x$ and all $\lambda\geq 0$;
		\item[(ii)] there exist a slit cone sub-bundle $A$ and an open cone sub-bundle $B$ such that $A \subset B\subset C$  and  $L$ is at least $C^1$ on $B$ and at least $C^4$ on $A$ with its {\em fundamental tensor} $\tilde g(x,y)$, defined as in \eqref{fundtensor}, being non-degenerate for all $(x,y)\in  A$;
		\item[(iii)] there exists a slit cone sub-bundle  $T\subset A\cap L^{-1}((-\infty,0)) $ such that its closure in  $L^{-1}((-\infty,0))$,  denoted by $T^A$, is a convex connected component of $L^{-1}((-\infty,0))$ contained in $B$ and, for~all $(x,y)\in T$, $\tilde g(x,y)$ has index $1$.  
	\end{itemize}
	
	A {\em Finsler spacetime} is a smooth finite dimensional manifold $\tilde M$ endowed with a Lorentz--Finsler metric. \vspace{-3pt}
\end{Definition}
This definition differs from the one in~\cite{HoPfVo19} essentially  because we  relax the condition that  there exists a connected component of $L^{-1}((-\infty,0))$, the~slit cone sub-bundle which represents all the  future-pointing timelike directions physically admissible, where $L$ is smooth (and $g$ has index $1$ on it). 
As done in~\cite{LaPeHa12}, a~quick and  elegant definition of a Lorentz--Finsler metric might consist in requiring that $L$ is $C^2$ a.e. on $T\tilde M$ with fundamental tensor having index $1$ a.e. on $T\tilde M$.  Anyway, it is preferable to control the lack of smoothness of $L$, hence  we allow the possibility that $\tilde g$ is not defined along some relevant future-pointing timelike direction  where $L$ remains differentiable at least once.   This requirement allows us to get  geodesics at least as weak extremal contained in $B$ of the energy functional. 
Let us recall that a piecewise $C^1$ curve $\gamma: [a,b]\to \tilde M$ is a continuous curve admitting a partition $\{t_i\}_{i\in\{0,\ldots, m\}}$, $m\in \N$, of~$[a,b]$ such that $\gamma|_{[t_{i-1},t_i]}$, for~all $i\in\{1,\ldots, m\}$, is $C^1$.  Let us denote by $\partial_xL:B\to T^*\tilde M$ and $\partial_yL:B\to T^*\tilde M$ the partial differentials of $L$ w.r.t. the first and the second  variable~respectively. \vspace{-3pt}
\begin{Definition}[Geodesics contained in $B$]
	Let $\gamma\colon [a,b]\to \tilde M$ be a piecewise $C^1$ curve such that $(\gamma,\dot \gamma)\subset  B$  then we say that $\gamma$ is a geodesic of $(\tilde M, L)$ if for any  
	\br piecewise $C^1$ vector field  $\zeta$ along $\gamma$ \er  with $\zeta(a)=\zeta(b)=0$ it holds
	\[\int_a^b(\partial_xL(\gamma,\dot\gamma)[\zeta]+\partial_yL(\gamma,\dot\gamma)[\dot \zeta])ds =0.\]  
\end{Definition}
Arguing as in \cite{CaJaSa14}, Prop. 2.51, it can be proved that the Legendre map of $L$ on $T^A$, i.e.,~$(x,y)\in T^A\mapsto  \partial_yL(x,y)[\cdot]\in T_x^*\tilde M$ is injective on $T^A$. Thus, if~$\gamma:[a,b]\to \tilde M$  is a geodesic such that $(\gamma, \dot\gamma)\subset T$  then, by~a standard argument about regularity of weak extremal and classical Finslerian computations (see, e.g.,~\cite{CaJaMa11}), we get that $\gamma$  must  be a $C^3$ curve satisfying the equation 
\beq\label{geoeq}
D^{\dot\gamma}_{\dot\gamma}\dot\gamma=0,
\eeq
where $D_{\dot\gamma}^{\dot\gamma}$ is the covariant derivative along $\gamma$ with reference vector $\dot\gamma$ defined by the Chern connection of $L$  which is well-defined on the open subset $A$ of $T\tilde M\setminus 0$ by (ii) of Definition~\ref{LF}. 
In local natural coordinates on $T\tilde M$, Equation  \eqref{geoeq} corresponds to $\ddot \gamma^i+\Gamma^i_{jl}(\gamma,\dot\gamma)\dot\gamma^j\dot\gamma^l=0$, where the components of the Chern connection are defined in Equation \eqref{Gamma} below. If~we know that  the Legendre map is injective on  $A$, the~same result holds for all weak extremals $\gamma$ such that  $(\gamma,\dot\gamma)\subset  A$. In~any case, if~we know that a  weak extremal  $\gamma$ is $C^1$  and $(\gamma,\dot\gamma)\subset A$ then it satisfies \eqref{geoeq} (and therefore it is actually $C^3$).\footnote{In some cases, smoothness or at least $C^1$-regularity of weak extremals hold; for example, this is true for some stationary splitting  Finsler spacetimes and for standard static Finsler spacetimes in next section, see, respectively, \cite{CapSta18}, Prop. A2 and \cite{CapSta16}, Th. 2.13.}
%the right order of the cited paper has been recovered; added ``respectively''.   
In particular, from~\eqref{geoeq} it follows that there exists a unique geodesic for each initial condition in $T$.  

As  Randers spacetime metrics show,  in~general $L$ will be not differentiable along null directions, i.e.,~along \br non-zero \er tangent vectors $(x,y)$ such that $L(x,y)=0$.   \br In order to have a definition for lightlike geodesics of a  non smooth  $L$, a~possible way is to require that, for~every initial null conditions $(x,y)$,  there~exists an open maximal interval $(-\epsilon, \epsilon)$ and a  $C^1$ curve $\gamma:(-\epsilon, \epsilon)\to \tilde M$, with~ $\gamma(0)=x$ and $\dot\gamma(0)=y$,   such that for every sequence $(x_k,y_k)\subset T\tilde M$ of initial conditions of  solutions $\gamma_k$  of \eqref{geoeq}, $\gamma$ is the limit in the $C^1$ topology of $\gamma_k$ (see \cite{HoPfVo19}, Def. 1-(iv), \cite{LaPeHa12}, Def. 1-(d)). A~more general way of defining them  (see~\cite{Minguz19, JavSan20}) is inspired by a well-known local property of lightlike geodesics in a spacetime: \er  \vspace{-3pt}
\begin{Definition}[Lightlike pregeodesics]\label{light}
	Let  $N\subset TM\setminus 0 $ be the set  of null directions, i.e.,~$N:=\{(x,y)\in T\tilde M\br \setminus 0\er: L(x,y)=0\}$. Let also $T^N$ be the closure of $T$ in $L^{-1}((-\infty,0])$. A~Lipschitz  curve $\gamma\colon[a,b]\to \tilde M$, such~that $(\gamma,\dot\gamma)\subset T^N\cap N$ a.e., is a {\em lightlike pregeodesic} if for any $s_0\in [a,b]$ there exists a neighbourhood $U$ of $\gamma(s_0)$ such that any two points in $\gamma([a,b])\cap U$ are not connected by any Lipschitz curve  $\alpha$ such that $(\alpha, \dot\alpha)\subset T^A$.  
\end{Definition}	\vspace{-3pt}
\br As a consequence of  \cite{JavSan20}, Theorem 6.6, we have that  \er if $T^N\cap N\subset A$ (i.e., $L$ is smooth on a neighbourhood of the   null directions in the boundary of $T^A$) then any lightlike pregeodesics \br  in the sense of Definition~\ref{light}, \er  is actually a geodesic, up~to reparametrization, i.e.,~it satisfies Equation~(\ref{geoeq}).	

\section{About the Notion of Stationary and Static Finsler~Spacetimes}
Let us recall the notion of a Killing vector field for a Finsler metric. We refer to~\cite{CapSta18} for details. Since ~$L$ is $C^1$ on the open cone sub-bundle $B$, we take  $B\subset T\tilde M\setminus 0$ as the base space instead of the slit tangent bundle which is usual in Finsler geometry (compare with~\cite{Lovas04}).  A~vector field $K$ on $\tilde M$ is a {Killing vector field} for $(\tilde M, L, B)$ if $K^c|_B(L)=0$, where  $K^c$ denotes the complete lift of $K$ to $T\tilde M$ (restricted to the open subset $B$). This is the vector on $T\tilde M$ whose local flow $\tilde \psi$ is given by $\tilde \psi_t(v)=(\psi_t(p), d\psi_t(p)[v])$, where $\psi$ is the flow of $K$, $p=\tilde \pi(v)$, $v\in TM$. Thus, if~$K$ is a Killing vector field then $L$ is invariant under the flow of $K^c$.
In natural local coordinates of $T\tilde M$, $K^c(L)$ is given by:\vspace{-3pt}
\[K^{c}(L)(x,y)=K^{h}(x)\frac{\partial L}{\partial x^{h}}(x,y)+\frac{\partial K^{h}}{\partial x^{i}}(x)y^{i}\frac{\partial L}{\partial y^{h}}(x,y),\]
for all $(x,y)\in B$ (the Einstein's sum convention is used in the above and in the following equations).
It is  not difficult to prove also that $K$ is a Killing field iff $K^c|_A$ is an infinitesimal generator of local $\tilde g$-isometries, i.e.,~for each $v\in A$ and for all $v_1,v_2\in  T_{\pi(v)}\tilde M$, we have  
\[\tilde g(\tilde \psi_t(v))\big[d \psi_{ t}(p)[v_1],d \psi_{t}(p)[v_2]\big]= \tilde g(v)[v_1,v_2].\]
for all $t\in I_p$, where $I_p\subset\R$ is an interval containing $0$  such that   the stages  $\psi_{t}$  are well-defined in a neighbourhood $U\subset \tilde M$ of $p=\pi(v)$ and $d\psi_t(p)[v]\in A$, for~each $t\in I_p$. 
Thus, the~Lie derivative of $\mathcal L_K\tilde g$ in $A$ vanishes. In~local natural coordinates on $T\tilde M$, this amount to say that 
\[K^{c}(\tilde g_{lj})+\frac{\partial K^h}{\partial x^l}\tilde g_{hj}+\frac{\partial K^h}{\partial x^j}\tilde g_{lh}=0.\]

\begin{Definition}[Stationary Finsler spacetime]
	A Finsler spacetime $(\tilde M, L)$ is said {\em stationary} if it is endowed with  a Killing vector field $K$ which is timelike, i.e., $L(x,K(x))<0$ for all $x\in \tilde M$.
\end{Definition}\vspace{-2pt}
In a Lorentzian manifold $(M,h)$, a~timelike Killing vector field $K$ is said {\em static} if $\mathrm{curl}K|_{\mathcal D}=0$, where $\mathcal D$ is the orthogonal distribution to $K$. Equivalently,  $K$ is static iff $\mathcal D$  is locally integrable; thus, for~each $p\in M$ there exist a spacelike hypersurface $S$, through $p$,  orthogonal to $K$, and~an open interval $I$ such that  the pullback of the metric $h$ by the flow of $K$, defined in $I\times S$, is given by  $-\Lambda dt^2+h_0$, where $t\in I$, $\partial_t$ is the pullback of $K$, $\Lambda=-h(K,K)$ and $h_0$ is the Riemannian metric induced on $S$ by $h$ (see \cite{One83}, Proposition~ 12.38). In~order to generalize this notion to Finsler spacetimes, requiring minimal  regularity assumptions on  the Lorentz--Finsler metric $L$, we consider  $\mathcal D$ as the distribution of dimension $n$ on $\tilde M$ defined pointwise by the kernel of the one-form $\partial_yL(x,K(x))[\cdot]$.
\begin{Definition}[Static Finsler spacetime]
	Let $(\tilde M, L)$ be a stationary Finsler spacetime endowed with a timelike Killing vector field $K$, such that $(x, K(x))\subset B$. We say that $K$ is \br {\em static} \er  if $\mathcal D:=\mathrm{ker}\big(\partial_yL(x,K(x))[\cdot]\big)$ is locally~integrable.
\end{Definition}
\begin{Remark}
	Let $U$ be a  vector field in $\tilde M$ such that $\big(x,U(x)\big )\in T^A$, for~all $x\in \tilde M$ and  $L\big(x,U(x)\big)=-1$. If~the distribution $\mathcal D=\mathrm{ker}\big(\partial_yL(x,U(x))[\cdot]\big)$ is integrable then its integral manifold can be used to define the {\em rest spaces} of the observer field $U$ (see the question posed in the final paragraph of~\cite{Pfeife19}). From~\cite{CapSta18}, Theorem 4.8,  if~$K:=\sigma U$ is also a Killing vector field, for~some positive function $\sigma$ on $\tilde M$, $B=T\tilde M $ and $L$ satisfies
	$L\big(x,K(x)\big)=L\big(x,-K(x)\big)$ and  $L\big(x, w\pm K(x)\big)=L(x,w)+L\big(x,K(x)\big)$ 
	for all $(x,w)\in \mathcal{D}$ then  $(\tilde M, L)$ is locally is isometric to a {\em standard static Finsler spacetime} (see the definition below).   
\end{Remark}	\vspace{-3pt}

Recall that we have assumed that in the open cone sub-bundle $B$,  $L$ is at least $C^1$, thus $\mathcal D$ above is well-defined. In~some Finsler spacetimes, this is the best possible  regularity level of $L$.  Consider, for~example, a~type of  {\em stationary splitting Finsler spacetime} introduced in~\cite{CapSta18}: 
assume that $\tilde M=\R\times M$ and  denote with $(t,x)$ points  in $\tilde M$ and by $(\tau,y)$ tangent vectors  of $T\tilde M$. 
Let \vspace{-3pt}
\beq\label{stationary}
L\big((t,x), (\tau,y)\big):=-\Lambda(x)\tau ^2+2\br b\er (x,y)\tau+F^2(x,y), \eeq  where $\Lambda$ is a smooth positive function on $M$, $\br b\er:TM\to \R$ a fiberwise positively homogeneous function and $F$ a Finsler metric on $M$.  Both $\br b\er $ and $F$ are assumed to be  smooth on $TM\setminus 0$; moreover,  the~fundamental tensor $g$ of $F$ (defined as in \eqref{fundtensor} with $F^2$ replacing $L$) is positive definite for any $(x,y)\in TM\setminus 0$ while the fiberwise Hessian of $\br b\er$ (defined analogously with $\br b\er$ in place of $L$) is positive semi-definite for all $(x,y)\in TM\setminus 0$. 
Let us denote by $\mathcal T$ the trivial line sub-bundle of $T\tilde M$ defined by the vector field $\partial_t$. In~this~case $C=T\tilde M$,  $B=T\tilde M\setminus \mathcal T$, $A=\big(T\tilde M\setminus \mathcal  T\big)\cap \big\{\big((t,x),(\tau,y)\big)\in T\tilde M: \tau>0\big\}$ (see \cite{CapSta18}, Prop. 3.3),
\baln
\lefteqn{T=}&\\
&\left\{\!\!\Big((t,x), (\tau,y)\Big)\in T\tilde M:y\in T_{x}M\setminus \{0\},\ \tau>\frac{\br b\er(x,y)}{\Lambda(x)}+\sqrt{\frac{\br b\er^2(x,y)}{\Lambda^2(x)}+\frac{F^2(x, y)}{\Lambda(x)}}\right\}\!\subset\!A\ealn
and $\partial_t$ is timelike and Killing.                                                                                                                                                                                                                                                                                                        
A Finsler spacetime of the type \eqref{stationary} has been considered  in~\cite{Rutz93}, where it has been shown to be a solution of the field equation $R=0$  (see next section). In~that paper, $L$ is a Finsler perturbation of the Schwarzschild metric, indeed $F$ is the norm of the Riemannian metric in the spacelike base and $\br b\er$ is a function conformal to the norm of  the standard Riemannian metric  on $S^2$,
see Equation~(40) in~\cite{Rutz93}:
\baln
\lefteqn{L\big((t,r, \theta,\varphi),(\tau, y_r,y_\theta,y_\varphi)\big):=}&\\ &\quad- \left(1-\frac{ 2m}{r}\right)\tau^2+\epsilon \left(1-\frac{ 2m}{r}\right)\tau\sqrt{y_\theta^2+\sin^2\theta y_\varphi^2}+\frac{y_r^2}{1-\frac{2m}{r}}+r^2(y_\theta^2+\sin^2\theta y_\varphi^2),
\ealn
where $\epsilon$ is a perturbation parameter.                                                                                                                                                                                                                                                                                                       
A particular case in type  \eqref{stationary}, is when $\br b\er$ is equal to a one-form $\omega$ on $M$. In~such a case,  $C=B=T\tilde M$ (i.e., $L$ is of class $C^1$ on $T\tilde M$) and $A=T\tilde M\setminus \mathcal T$. The~slit cone sub-bundle  $T$ of timelike future-pointing vector is defined as above with $\omega$ replacing $\br b\er$; now  there is also another  slit cone sub-bundle associated to $L$ which is 
\baln
\lefteqn{T^-=}&\\
&\left\{\Big((t,x), (\tau,y)\Big)\in T\tilde M:y\in T_{x}M\setminus \{0\},\ \tau<\frac{\omega_{x}(y)}{\Lambda(x)}-\sqrt{\frac{\omega^2_{x}(y)}{\Lambda^2(x)}+\frac{F^2(x, y)}{\Lambda(x)}}\right\}\subset A.
\ealn 

In particular, in~this case  both $T^A$ and $T^{-, A}$ are convex and $L$ is smooth on $N=\big\{\big((t,x), (\tau,y)\big)\in T\tilde M\setminus 0:L\big((t,x), (\tau,y)\big)=0\big\} = \big\{\big((t,x), (\tau,y)\big)\in T\tilde M\setminus 0:\tau=\frac{\omega_{x}(y)}{\Lambda(x)}\pm\sqrt{\frac{\omega^2_{x}(y)}{\Lambda^2(x)}+\frac{F^2(x, y)}{\Lambda(x)}}\big\}$.
The vector field $\partial_t$ is a timelike Killing vector field of $(\tilde M, L)$ which  is static if $\omega=0$ (with integral  manifolds $\{t\}\times M$, $t\in\R$). Finsler spacetimes $( \R\times M, L)$, with~$L$ of the type \eqref{stationary} and $\omega=0$ have been called in~\cite{CapSta16}, {\em standard~static Finsler spacetime}. \vspace{-3pt}
\begin{Remark}\label{standardtostatic}
	We observe that the slit cone sub-bundle $T$ is defined also as the set of  timelike vectors with positive component $\tau$ of the standard static Finsler spacetime  $(\R\times M, L_\omega)$, where \vspace{-3pt}
	\beq\label{static}
	L_\omega\big((t,x), (\tau,y)\big):=-\tau ^2+F_\omega^2(x,y),\eeq
	and $F_\omega$ is  given by
	\beq\label{Fomega}
	F_\omega(x,y)=\frac{\omega_{x}(y)}{\Lambda(x)}+\sqrt{\frac{\omega^2_{x}(y)}{\Lambda^2(x)}+\frac{F^2(x, y)}{\Lambda(x)}}.\eeq
	
	In fact, from~\cite{CapSta18}, Th. 5.1, $F_\omega$ is a Finsler metric  on $M$.
\end{Remark}
\section{Vacuum Field~Equations}
In general relativity, geodesics deviation equation is used to describe  the  relative acceleration of a congruence of point particles.  In~particular, in~vacuum, the~absence of tidal forces implies that  $R^i_{jil}y^jy^l=0$, where $R^i_{jkl}$ are the components of the Riemann curvature tensor; as a consequence  Einstein field equations $R_{jl}=R^i_{jil}=0$ are satisfied and, vice~versa, if~$R_{jl}=0$, then $R^i_{jil}y^jy^l=0$. In~\cite{Rutz93}, S. Rutz used this equivalence to generalize  Einstein vacuum  field equations to the Finsler  setting  as a single scalar equation $R(x,y)=R^i_i(x,y)=0$ on the slit tangent bundle. 
Here $R=R(x,y)$ is the {\em Finsler Ricci scalar} defined as follows.  
Let $\tilde g^{ij}(x,y)$ the components of the inverse of the matrix representing the fundamental tensor $\tilde g$ at the point $(x,y)\in A$ and let $G^i(x,y)$, $(x,y)\in A$,   be the {\em spray coefficients of $L$}:
\beq\label{Gi}
G^i(x,y):=\frac 1 4\tilde g^{ij}(x,y)\left(\frac{\partial^2L}{\partial x^k\partial y^j}(x,y)y^k-\frac{\partial L}{\partial x^j}(x,y)\right),\eeq
so that a smooth curve $\gamma$, such that $(\gamma,\dot\gamma)\subset A$, is a geodesic of $L$ if and only if, in~natural local coordinate on $T\tilde M$, $\ddot\gamma^i+2G^i(\gamma,\dot\gamma)=0$. 
Let
\bml\label{Rij}
R^i_k(x,y):=2\frac{\partial G^i}{\partial x^k}(x,y)-y^m\frac{\partial^2 G^i}{\partial x^m\partial y^k}(x,y)+2G^m(x,y)\frac{\partial^2 G^i}{\partial y^m\partial y^k}(x,y)\\
-\frac{\partial G^i}{\partial y^m}(x,y)\frac{\partial G^m}{\partial y^k}(x,y).
\eml

The {\em Riemann curvature of $L$} at $(x,y)\in A$ is  the linear map $\mathbf R_y:T_xM\to T_xM$ given by
$\mathbf R_y:=R^i_k(x,y)\partial_{x^i}\otimes dx^k$. It can be shown (see \cite{Shen01}, Equations (8.11)--(8.12)) that $R^i_k(x,y)=R^i_{jkl}(x,y)y^jy^l$ %citation format is wrong %Added one round braket at the end
where $R^i_{jkl}$ are the components of the $hh$ part of the curvature $2$-forms of the Chern connection which are equal, for~any $(x,y)\in A$, to~\vspace{-3pt}
\beq R^i_{jkl}(x,y):=\frac{\delta \Gamma^i_{jl}}{\delta x^k}(x,y)-\frac{\delta \Gamma^i_{jk}}{\delta x^l}(x,y)+\Gamma^m_{jl}(x,y)\Gamma^i_{mk}(x,y) -\Gamma^m_{jk}(x,y)\Gamma^i_{ml}(x,y),\label{Rijkl}\eeq
being $\frac {\delta }{\delta x^i}$ the vector field on $A$ defined by $\frac {\delta }{\delta x^i}:=\frac{\partial}{\partial x^i}-N^m_i(x,y)\frac{\partial }{\partial y^m}$, where $N^m_i(x,y):=\frac{\partial G^m}{\partial y^i}(x,y)$,
and $\Gamma^i_{jk}$ are the components of the Chern connection, 
\beq\label{Gamma}
\Gamma^i_{jk}(x,y):=\frac 1 2 \tilde g^{il}(x,y)\left(\frac{\delta \tilde g_{lk}}{\delta x^j}(x,y)-\frac{\delta \tilde g_{jk}}{\delta x^l}(x,y)+\frac{\delta \tilde g_{lj}}{\delta x^k}(x,y)\right),\eeq
for all $(x,y)\in A$.
The Finsler Ricci scalar is then the  contraction of the Riemann curvature $R(x,y):=R^i_i(x,y)$, $(x,y)\in A$. It has been observed in~\cite{HoPfVo19} that   Rutz's equation is not variational but can be completed, in~a suitable sense,  to~a variational equation on $A\setminus N$ (which  coincides  with the field equation in~\cite{PfeWoh12a}  on the set  $\{(x,y)\in A:L(x,y)=-1\}$): 
\beq\label{fieldeq}
\frac{3R}{L}-\frac{1}{2}\tilde g^{ij}\frac{\partial^2R}{\partial y^i\partial y^j}    - \tilde g^{ij}\left(\frac{\delta P_{i}}{\delta x^j}-P_h\Gamma^h_{ij}- P_iP_j + \frac{\partial}{\partial y^j} \Big(y^k\big(\frac{\delta P_{i}}{\delta x^k}-P_h\Gamma^h_{ik}\big)\Big)\right) = 0
\eeq
where  
\[P^i_{jk} := \frac{\partial ^2G^i}{\partial y^j\partial y^k} -\Gamma^i_{jk}\]
are the components of the {\em Landsberg tensor} and $P_i=P^l_{li}$.
We stress that both equations $R=0$ and \eqref{fieldeq} are equivalent to Einstein vacuum equation $\mathrm{Ric}(h)=0$ if $L$ comes from a Lorentzian metric $h$, $L(x,y)=h_x(y,y)$ (see, respectively,  \cite{Rutz93}, \S 3 and \cite{HoPfVo19}, \S VII).
%the right order of cited results has beeen recovered
\section{On the Analyticity of  the Average Metric of a Static Berwald~Solution}\label{sec5}
We consider now a static Finsler spacetime $\tilde M=\R\times M$ with $L$ of the type \eqref{static}, but~$F_\omega$ will be any Finsler metric $F$ on $M$, not necessarily the one in \eqref{Fomega}.
Let us assume also that $F$ is a {\em Berwald metric}. This~means that    the   components  of the Chern connection of $F$ (defined as in \eqref{Gamma} with the fundamental tensor $g$ of $F$ replacing $\tilde g$) do not depend on $\br (x,y)\er\in TM$ or equivalently the components  $N^i_j\br (x,y)\er$ are linear in $y$ (precisely, it holds $N^i_j\br (x,y)\er =\Gamma^i_{jk}(x)y^k$, see \cite{BaChSh00}, prop. 10.2.1). From~\eqref{Rijkl},  the~ components of the Riemannian curvature tensor $R^i_{jkl}$ of $F$ are independent of $y$ too and the  Finsler Ricci scalar is equal to $\br R(x,y) \er=R^i_{jil}(x)y^jy^l$ for any $(x,y)\in TM\setminus 0$. 

Let us use the index  $0$ for the components corresponding to the coordinate $t\in\R$ and by $\alpha,\beta,\br \gamma\er$ the ones corresponding to coordinate systems in  $M$; moreover let us distinguish  Finslerian quantities  of $(\tilde M,L)$ from the ones of $(M,F)$ by a tilde. It can be soon realized that $L$ is Berwald as well; \br indeed as $\tilde g^{00}= -1$ and $\tilde g^{0\alpha}= 0$, from~\eqref{Gi}, taking also into account that  $L$ does not depend on $t$ ($:=x^0$) and $\frac{\partial L}{\partial x^\alpha}=\frac{\partial F^2}{\partial x^\alpha}$, we~ get \er $\tilde G^0=0$ and $\tilde G^{\alpha}((t,x),(\tau,y))=G^\alpha(x,y)$. Thus, $\tilde N^0_i=0$, $\tilde N^\alpha_0=0$ and $\tilde N^\alpha_\beta((t,x),(\tau,y))=N^\alpha_\beta(x,y)$, i.e.,~they are all linear in $(\tau,y)$.

Since $L$ is Berwald,  its non-vanishing spray coefficients 
\bmln 
\tilde G^\alpha\big((t,x),(\tau,y)\big)=\frac 1 2\left( \tilde N^\alpha_0\big((t,x),(\tau,y)\big)\tau+\tilde N^\alpha_{\beta}\big((t,x),(\tau,y)\big)y^\beta\right)=\\ 
\frac 1 2N^\alpha_{\beta}(x,y)\big)y^\beta=G^\alpha(x,y)y^\beta
\emln
are quadratic in $y$ and then, as~in \cite{Shen01}, Prop. 7.2.2, we get $\frac{\partial ^2\tilde G^\alpha}{\partial y^i\beta\partial y^k} =\tilde \Gamma^\alpha_{ik}$. Since  $\tilde g^{00}=\tilde g_{00}=-1$, $\tilde g^{0,\alpha}=\tilde g_{0\alpha}=0$ and $\frac{\delta \tilde g_{jk}}{\delta x^0}=0$, for~all $j,k\in\{0,\ldots n \}$, we also have that $\tilde \Gamma^0_{jk}=0$, for~all $j,k\in\{0,\ldots n \}$. Thus,    the~  Landsberg tensor $\tilde P^i_{jk}$ vanishes. Hence, for~Berwald $L$, \eqref{fieldeq} reduces to  \vspace{-3pt}
\begin{equation*}
\frac{3\tilde R}{L}-\frac{1}{2}\tilde g^{ij}\frac{\partial^2\tilde R}{\partial y^i\partial y^j}=0.
\end{equation*}
Taking into account that $\frac{\partial^2\tilde G^\alpha}{\partial y^0\partial y^k}=\frac{\partial^2 G^\alpha}{\partial y^0\partial y^k}=0$, for~all $k\in\{0,\ldots, n\}$,  this also implies that $\tilde \Gamma^\alpha_{0k}=0$ and $\tilde \Gamma^\alpha_{\beta\gamma}(t,x)=\Gamma^\alpha_{\beta\gamma}(x)$, that could be also proved directly by \eqref{Gamma}.  Thus, $\tilde R^0_{j0l}=0$, $\tilde R^{\alpha}_{0\alpha l}=0=\tilde R^{\alpha}_{j\alpha 0}$ and $\tilde R^{\alpha}_{\beta\alpha\gamma}\big((t,x),(\tau,y)\big)=\tilde R^{\alpha}_{\beta\alpha\gamma}\big(t,x)= R^{\alpha}_{\beta\alpha\gamma}(x)$, which imply that 
%and then the left-hand side of \eqref{fieldeq} reduces to $\frac{3R}{L}-\frac{1}{2}\tilde g^{ij}\frac{\partial^2R}{\partial y^i\partial y^j}$. 
the Finsler Ricci scalar  of $F$ and $L$ coincide.  Thus,    if~$F$, or~equivalently $L$, has  vanishing Ricci scalar,  \br $0=R(x,y)=\tilde R\big((t,x),(\tau,y))$ for all  $\big ((t,x),(\tau, y)\big) \in T\tilde M\setminus 0$ then $L$  satisfies Equation \eqref{fieldeq} 
on $T\tilde M\setminus (\mathcal T\cup N)$.\er\footnote{The converse is true only in some cases, for~example for some Bogoslowski-Berwald metric, see~\cite{FuPaPf18}.}

It is well-known that the components \eqref{Gamma} of the Chern connection of a Berwald metric can be obtained from different Riemannian metrics as their  Christoffel symbols,~\cite{Szabo81}.
In particular,  see~\cite{Crampi14}, this   is true for the Riemannian metric 
\beq h_x(V_1,V_2):=\frac{\int_{S_x} g(x,y)[V_1,V_2]d\lambda(y)}{\int_{S_x}d\lambda(y)},\label{riemannian}\eeq
where 
$S_x:=\{y\in T_xM:F(y)=1\}$, $x\in M$   and  $d\lambda(y)$ is  the measure induced on $S_x$  by the Lebesgue measure on $\R^n$.

Let us assume that $F$ is of class $C^4$ on $TM\setminus 0$; then  $g(x,y)$ is of class $C^2$  on $TM\setminus 0$. Notice that   
the indicatrix bundle $\{(x,y)\in TM: F(x,y)=1\}$ is a $C^4$ embedded hypersurface in $TM$.  Thus,  both the area  of $S_x$ and the numerator in \eqref{riemannian} are  $C^2$ in $x$ and then $h$ is a $C^2$ Riemannian metric on $M$.
From  \eqref{Rijkl} and the fact that $F$ is Berwald, the~components $R^i_{jkl}$ are  equal to the ones of the Riemannian curvature tensor of $h$ and then  we have \vspace{-3pt} \[\mathrm{Ric}(h)_{\alpha\beta}(x)=R^m_{\alpha m\beta}(x)=\frac 1 2 \frac{\partial^2}{\partial y^\alpha\partial y^\beta}\left (R^m_{jml}(x)y^jy^l\right)=\frac 1 2 \frac{\partial^2 R}{\partial y^\alpha\partial y^\beta}(x,y)=0.\]

From \cite{DeTKaz81}, Theorem 4.5, it follows that 
in an atlas of $M$ of harmonic coordinates of $h$,   $h$ itself is analytic.
We can summarize the above reasoning in the following result:\vspace{-3pt}
\begin{Theorem}\label{main}
	Let $(\tilde M, L)$ be a standard static Finsler spacetime, $\tilde M=\R\times M$, $L\big((t,x),(\tau,y)\big)=-t^2+F^2(x,y)$. Assume that $F\in C^4(TM\setminus 0)$ is Berwald with vanishing Ricci Finsler scalar $R$, then $L$ is Berwald, satisfies the field equation \eqref{fieldeq}
	%``equation'' should be written without the capital letter at the beginning 
	with  and  the metric  $\frac{\int_{S_x} g(x,y)[\cdot, \cdot]d\lambda(y)}{\int_{S_x}d\lambda(y)}$ on $M$, where $g$ is the fundamental tensor of the Finsler metric $F$, is~analytic in its harmonic coordinates. \vspace{-3pt}
\end{Theorem}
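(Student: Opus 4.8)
The plan is to reduce the Finslerian statement to a purely Riemannian one and then invoke elliptic regularity for Einstein metrics. First I would verify that $L$ inherits the Berwald property from $F$. Since $L$ splits as $-\tau^2+F^2(x,y)$ and is independent of the coordinate $t=x^0$, the block structure $\tilde g^{00}=-1$, $\tilde g^{0\alpha}=0$ forces the spray coefficients to satisfy $\tilde G^0=0$ and $\tilde G^\alpha\big((t,x),(\tau,y)\big)=G^\alpha(x,y)$; because $F$ is Berwald these are quadratic in $y$, so the $\tilde N^i_j$ are linear in $(\tau,y)$ and $L$ is Berwald. Consequently the mixed Chern-connection coefficients $\tilde\Gamma^\alpha_{0k}$ and all $\tilde\Gamma^0_{jk}$ vanish, and the spatial ones reduce to those of $F$.

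Next I would kill the non-trivial terms of the field equation \eqref{fieldeq}. The Berwald property gives $\partial^2\tilde G^i/\partial y^j\partial y^k=\tilde\Gamma^i_{jk}$, hence the Landsberg tensor $\tilde P^i_{jk}$ vanishes identically; every term of \eqref{fieldeq} built from $P_i$ therefore drops out, leaving only $\tfrac{3\tilde R}{L}-\tfrac12\tilde g^{ij}\partial^2\tilde R/\partial y^i\partial y^j$. Using the reductions $\tilde R^0_{j0l}=0$, $\tilde R^\alpha_{0\alpha l}=\tilde R^\alpha_{j\alpha 0}=0$ and $\tilde R^\alpha_{\beta\alpha\gamma}=R^\alpha_{\beta\alpha\gamma}$, I would check that the Ricci scalar of $L$ equals that of $F$. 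Thus the hypothesis $R\equiv 0$ forces $\tilde R\equiv 0$ on $T\tilde M\setminus(\mathcal T\cup N)$, both surviving terms vanish, and \eqref{fieldeq} is satisfied.

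The analytic conclusion is where the substance lies. By Szab\'o's theorem (\cite{Szabo81}) the Chern connection of the Berwald metric $F$ is the Levi-Civita connection of a Riemannian metric, and by \cite{Crampi14} one concrete realization is the average $h$ in \eqref{riemannian}. I would first secure regularity: since $F\in C^4(TM\setminus 0)$ its fundamental tensor $g$ is $C^2$, the indicatrix bundle is a $C^4$ embedded hypersurface, and differentiation under the integral sign shows that both $\int_{S_x}g(x,y)[\cdot,\cdot]\,d\lambda(y)$ and the area $\int_{S_x}d\lambda(y)$ are $C^2$ in $x$; hence $h$ is a genuine $C^2$ Riemannian metric. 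Because $h$ and $F$ share the same connection, they share the curvature components $R^i_{jkl}$, so $\mathrm{Ric}(h)_{\alpha\beta}=R^m_{\alpha m\beta}=\tfrac12\,\partial^2R/\partial y^\alpha\partial y^\beta=0$: the averaged metric is Ricci-flat.

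Finally, $h$ being a $C^2$ Einstein (here vacuum) metric, I would pass to an atlas of harmonic coordinates, in which the Einstein equations become a quasilinear elliptic system depending analytically on $h$ and its first derivatives; the analyticity of $h$ in these coordinates then follows from \cite{DeTKaz81}, Theorem 4.5. The main obstacle I anticipate is not any single step in isolation but the regularity bookkeeping: one must ensure that averaging over the $C^4$ indicatrix yields a metric smooth enough ($C^2$) for ``Ricci-flat'' to be meaningful in the classical sense and for the DeTurck--Kazdan bootstrap to begin, and that the identification $\mathrm{Ric}(h)_{\alpha\beta}=\tfrac12\,\partial^2R/\partial y^\alpha\partial y^\beta$ is legitimate at exactly this level of differentiability.
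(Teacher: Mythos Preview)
Your proposal is correct and follows essentially the same route as the paper: the same block-structure computation to show $L$ is Berwald, the same reduction of \eqref{fieldeq} via vanishing Landsberg tensor and $\tilde R=R$, the same appeal to Szab\'o/Crampin for the averaged metric $h$, the same $C^4\Rightarrow C^2$ regularity check on the indicatrix integral, and the same identification $\mathrm{Ric}(h)_{\alpha\beta}=\tfrac12\,\partial^2 R/\partial y^\alpha\partial y^\beta=0$ followed by DeTurck--Kazdan. Even the regularity concern you flag at the end is precisely the one the paper pauses on.
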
                                                                                                                                                                                                                                                                                                                                                                
Let us now consider the case when $F$ is equal to $F_\omega$ in \eqref{Fomega}; in light of Theorem~\ref{main} we would like to have conditions  ensuring that $F_\omega$ is Berwald. It is well-known that for a Randers metric $F=\alpha +\beta$, where $\alpha$ is the norm of a Riemannian metric and $\beta$ a one-form, this holds if and only if $\nabla \br \beta\er=0$, where $\nabla$ is the Levi--Civita connection of the Riemannian metric (see \cite[Th. 11.5.1]{BaChSh00}). Let us see that a sufficient condition of this type holds for $F_\omega$ as well. Let us denote by $\beta$ the one-form on $M$ defined as $\beta :=\omega/\Lambda$ and by   $G$ the Finsler metric given by $G:=\big(F^2/\Lambda+ \beta^2\big)^{1/2}$.\vspace{-3pt}
\begin{Proposition}\label{enough}
	Assume that the Finsler metric $F/\sqrt{\Lambda}$ on $M$ is Berwald with vanishing Ricci scalar and that $D\beta=0$, where $D$ is the linear covariant derivative on $M$ induced by the Chern connection of $F/\sqrt{\Lambda}$. Then the Finsler metric $G+\beta$ is Berwald with vanishing Ricci scalar as well.\vspace{-3pt}
\end{Proposition}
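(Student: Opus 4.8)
The plan is to prove that $G$ and $F_\omega=G+\beta$ share the spray of the Berwald metric $\phi:=F/\sqrt{\Lambda}$, and then to read off both conclusions from this single fact. Denote by $G^i_\phi=\frac12\Gamma^i_{jk}(x)y^jy^k$ the (fiberwise quadratic) spray coefficients of $\phi$, by $a_{ij}$ its fundamental tensor, and write $\beta=b_iy^i$, so that $D\beta=0$ reads $\partial_{x^k}b_i=\Gamma^m_{ik}b_m$ in coordinates. If one shows that the spray coefficients of both $G$ and $F_\omega$ equal $G^i_\phi$, then, being fiberwise quadratic, these metrics are Berwald with the same connection $\Gamma^i_{jk}(x)$ (recall that for a Berwald metric $\Gamma^i_{jk}=\partial_{y^j}\partial_{y^k}G^i$; see \cite{Shen01}, Prop.~7.2.2). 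Consequently their curvature components \eqref{Rijkl}, which for a Berwald metric reduce to the ordinary curvature of the affine connection $\Gamma^i_{jk}(x)$ and thus depend on $\Gamma$ alone, coincide with those of $\phi$; since the Finsler Ricci scalar is the contraction $R^i_{jil}(x)y^jy^l$ and vanishes for $\phi$, it vanishes for $F_\omega$ as well, and $F_\omega$ then satisfies \eqref{fieldeq} by the computation of Section~\ref{sec5}.

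For $G$ I would start from $G^2=\phi^2+\beta^2$, whose fundamental tensor is $g^G_{ij}=a_{ij}+b_ib_j$, and evaluate the numerator $\mathcal N_j[\Phi]:=y^k\partial_{x^k}\partial_{y^j}\Phi-\partial_{x^j}\Phi$ appearing in \eqref{Gi} on each summand. The $\phi^2$ part gives $4a_{ij}G^i_\phi$ by the very definition of the spray. In the $\beta^2$ part the parallelism $\partial_{x^k}b_i=\Gamma^m_{ik}b_m$ turns $\partial_{x^k}b_m\,y^my^k$ into $2b_lG^l_\phi$, producing the term $4(b_lG^l_\phi)b_j$, while the remaining contribution assembles into $b_m(\Gamma^m_{jk}-\Gamma^m_{kj})y^k$ and vanishes by the symmetry $\Gamma^m_{jk}=\Gamma^m_{kj}$ of the Chern connection \eqref{Gamma}. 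Summing, $\mathcal N_j[G^2]=4(a_{ij}+b_ib_j)G^i_\phi=4g^G_{ij}G^i_\phi$, so the spray of $G$ is $G^i_\phi$; in particular $G$ is Berwald with connection $\Gamma^i_{jk}$, and $\beta$ is parallel for $G$ too.

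The substantive step is the Randers-type change $F_\omega=G+\beta$, now over the Berwald background $G$. Expanding $F_\omega^2=G^2+2G\beta+\beta^2$, the fundamental tensor is $g^{F_\omega}_{lj}=g^G_{lj}+\beta\,G_{y^ly^j}+b_lG_{y^j}+b_jG_{y^l}+b_lb_j$, and the only part of $\mathcal N_j[F_\omega^2]$ not immediately of the desired shape comes from the cross product $2G\beta$; since $G=\sqrt{\phi^2+\beta^2}$ is not polynomial in $y$, this is where the work lies. I would reduce $\mathcal N_j[2G\beta]$ using three ingredients: the identity $y^kG_{x^k}=2G^m_\phi G_{y^m}$, expressing constancy of $G$ along its geodesics (valid because $G$ is Berwald with spray $G^i_\phi$ by the previous step); the Euler relation $g^G_{lj}=G_{y^l}G_{y^j}+G\,G_{y^ly^j}$; and once more $\partial_{x^k}b_i=\Gamma^m_{ik}b_m$ together with the symmetry of $\Gamma$, which discards the $\mathrm{d}\beta$-type terms. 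After these substitutions $\mathcal N_j[F_\omega^2]$ collapses to $4g^{F_\omega}_{lj}G^l_\phi$, that is, the spray of $F_\omega$ is again $G^i_\phi$.

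The main obstacle I expect is exactly this cross term, and the decisive cancellation is the contracted Euler relation $G^l_\phi\,g^G_{lj}=G\,G^l_\phi G_{y^ly^j}+(G^l_\phi G_{y^l})G_{y^j}$: it is what matches the $\beta/G$-homogeneous remainder of $\mathcal N_j[2G\beta]$ against the terms $\beta\,G_{y^ly^j}$ and $b_lG_{y^j}$ in $g^{F_\omega}_{lj}$, leaving precisely $G^i_{F_\omega}=G^i_\phi$. This recovers, for a Berwald rather than a Riemannian background, the classical fact that a Randers change by a parallel one-form preserves both the Berwald property and the connection, cf.~\cite{BaChSh00}, Th.~11.5.1; the vanishing of the Ricci scalar then follows from the reduction of the first paragraph. (Strong convexity of $G+\beta$, needed for it to be a genuine Finsler metric, is available from the analysis behind \eqref{Fomega}.)
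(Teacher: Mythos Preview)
Your proof is correct and takes a genuinely different route from the paper's. The paper argues variationally: for $G$ it computes the first variation of the energy $\frac12\int G^2(\gamma,\dot\gamma)\,ds$, uses $D\beta=0$ and torsion-freeness to reduce the Euler--Lagrange equation to $D_{\dot\gamma}\dot\gamma+B\,\tfrac{d}{ds}\beta(\dot\gamma)=0$ (with $B$ the $g(\gamma,\dot\gamma)$-dual of $\beta$), and then contracts with $\beta$ to force $\tfrac{d}{ds}\beta(\dot\gamma)=0$, whence $D_{\dot\gamma}\dot\gamma=0$; for $G+\beta$ it works with the \emph{length} functional, observing that the $\int\beta(\dot\gamma)\,ds$ piece contributes only a boundary term (since $D\beta=0$) so the critical points, parametrized by $G$-arclength, again satisfy $D_{\dot\gamma}\dot\gamma=0$ and are automatically affinely parametrized for $G+\beta$. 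You instead compute the spray coefficients directly from \eqref{Gi}, splitting $\mathcal N_j$ over the summands of $G^2$ and of $F_\omega^2$ and using $\partial_{x^k}b_i=\Gamma^m_{ik}b_m$ together with the symmetry of $\Gamma$ to kill the $d\beta$-type terms; the delicate cross piece $\mathcal N_j[2G\beta]$ is tamed by the identity $y^kG_{x^k}=2G^m_\phi G_{y^m}$ (which indeed follows, once you know $G$ has spray $G^i_\phi$, from $y^l g^G_{il}=GG_{y^i}$ and $4g^G_{ij}G^i_\phi=\mathcal N_j[G^2]$ contracted with $y^j$) and by the Euler relation $g^G_{lj}=G_{y^l}G_{y^j}+G\,G_{y^ly^j}$, which is exactly what cancels the $1/G$ remainder against the $\beta\,G_{y^ly^j}$ and $b_lG_{y^j}$ pieces of $g^{F_\omega}_{lj}$. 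The paper's argument is shorter and more conceptual (the Randers step is essentially ``a parallel one-form is a null Lagrangian for length''), while yours is self-contained at the level of \eqref{Gi}--\eqref{Gamma} and makes the equality of sprays completely explicit; both reach the same endpoint, namely that $G$ and $G+\beta$ share the Berwald spray of $\phi$, from which the Ricci conclusion follows via \eqref{Rij}--\eqref{Rijkl} exactly as you say.
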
	
\begin{proof}
	Let us show firstly that $G$ is Berwald. 
	%To this  end let us denote by $h$ a Riemannian metric on $M$ such that its Levi--Civita is equal to $D$ and  let $B$ be the vector field on $M$ representing $\beta$ w.r.t $h$, i.e.,~$\beta(y)=h(y,B)$ for all $y\in TM$. 
	In order to evaluate the spray coefficients of $G$, we  compute the geodesics equation of the Finsler manifold $(M,G)$ as the Euler--Lagrange equation of  the energy functional $E_G$ of $G$. Without~loosing generality, we can  assume for this purpose that $\gamma\colon[a,b]\to M$, $\gamma=\gamma(s)$, is a smooth regular curve (i.e.,  $\dot\gamma(s)\neq 0$, for~all $s\in [a,b]$) and that $\sigma\colon   [-\epsilon,\epsilon ]\times [a,b]\to M$, $\sigma=\sigma(r,s)$, is a smooth variation of $\gamma$ (i.e., $\sigma(0,\cdot)=\gamma$) such that for all $r\in [-\epsilon,\epsilon]$, $\sigma(r,a)=\gamma(a)$, $\sigma(r,b)=\gamma(b)$ and $\sigma(r,\cdot)$ is regular as well. Let us denote respectively by $T$ and $U$ the vector field along $\sigma$ defined by $\partial_t\sigma$ and $\partial_r\sigma$. Now, in~order to  compute the variation of $E_G$ associated to $\sigma$,  we can consider  separately the terms coming from the variation of the energy functional of $F/\sqrt{\Lambda}$ and the ones coming from the variation  of $\frac 12 \int_a^b \beta^2(\dot\gamma)d s$. The~variation of the latter functional is equal to 
	%can be computed using the metric $g(\sigma, T)[\cdot,\cdot]$; as $D\beta=0$, $DB=0$ as well because $0=D_X\beta(Y) =D_X(gand then 
	%\bmln\partial_r\left(h\big(T,B(\sigma)\big)^2\right)=2h\big(T,B(\sigma)\big)\Big(h\big(D_UT,B(\sigma)\big)+h\big(T,(D_UB)(\sigma)\big)\Big)\\=2h\big(T,B(\sigma)\big)h\big(D_UT,B(\sigma)\big)=2h\big(T,B(\sigma)\big)h\big(D_TU,B(\sigma)\big),\emln
	%\bml\label{varbeta2}\frac 1 2\int_a^b\partial_r\Big(\beta^2(T)\Big)ds =\int_a^b\beta(T)\partial_r\Big(\beta(T)\Big)ds =\int_a^b  \beta(T)\partial_r\Big(g(\sigma, T)[B, T]\Big)ds\\
	%=\int_a^b \beta(T)\Big(g(\sigma, T)[D_U B, T ]+g(\sigma, T)[B,D_U T ]\Big)ds = \int_a^b \beta(T)g(\sigma, T)[B,D_T U ]ds,
	%\eml
	\beq\label{varbeta2}\frac 1 2\int_a^b\partial_r\Big(\beta^2(T)\Big)ds =\int_a^b\beta(T)\Big(\big(D_U\beta\big)(T)+\beta(D_UT)\Big)ds =\int_a^b  \beta(T)\beta(D_TU)ds \eeq
	where we have used the fact that the connection $D$ is torsion free (see \cite[p. 262]{BaChSh00}) and hence $D_UT=D_TU$. 
	% that for any curve smooth curve $\alpha=\alpha(r)$ with velocity vector field $U$,  and any couple of vector fields $B,\ T$ along $\alpha$, $\frac {d}{dr} g(\alpha, T)[B,T]=g(\alpha, T)[D_UB,T]+g(\alpha, T)[B,D_UT]$ (see \cite[p.260]{BaChSh00}) and, being $D\beta=0$, $g(\sigma,T)[D_UB,T]=0$ as well. In fact, 
	%\bmln 0=(D_U\beta)(T)=U\big(\beta(T)\big)-\beta(D_UT)=U\big(g(\sigma, T)[B,T]\big)- g(\sigma, T)[B,D_UT]\\
	%=g(\sigma, T)[D_UB,T]+g(\sigma, T)[B,D_UT]- g(\sigma, T)[B,D_UT]=g(\sigma, T)[D_UB,T].
	%\emln
	Evaluating \eqref{varbeta2}  at $r=0$ gives 
	\bmln \int_a^b\beta(\dot\gamma)\beta\big(D_{\dot\gamma} V \big)ds=\int_a^b\beta(\dot\gamma)\frac{d}{ds}\beta(V)ds=\\ -\int_a^b\beta(V)\frac{d}{ds}\beta(\dot\gamma)ds=-\int_a^bg(\gamma,\dot\gamma)[B,V]\frac{d}{ds}\beta(\dot\gamma)ds,\emln
	where $V$ is the variational vector field associated to $\sigma$, i.e.,~$V:=U\big(\sigma(0,\cdot)\big)=\partial_r\sigma(r,\cdot)|_{r=0}$ and  $B$ is the vector field along $\gamma$ representing $\beta$ with respect to the Riemannian metric, over~$\gamma$,  $g(\gamma,\dot\gamma)[\cdot,\cdot]$,  $g$ being the fundamental tensor of $F/\sqrt{\Lambda}$. 
	%\footnote{Notice that the vector field $B$ along $\gamma$ is independent of the variation $\sigma$ considered} 
	As the variation of the energy functional of $F/\sqrt{\Lambda}$ at $r=0$ gives $\int_a^b g(\gamma,\dot\gamma)[\dot\gamma, D_{\dot\gamma}V]ds$, we get that a 
	a smooth critical point $\gamma$ of $E_G$ satisfies the equation
	\beq \label{fakeeq}
	D_{\dot\gamma}\dot\gamma+B\frac{d}{ds}\beta(\dot\gamma)=0\eeq
	
	Hence
	\[0=\beta(D_{\dot\gamma}\dot\gamma)+\beta(B)\frac{d}{ds}\beta(\dot\gamma)=\frac{d}{ds}\beta(\dot\gamma)+\beta(B)\frac{d}{ds}\beta(\dot\gamma)=\frac{d}{ds}\beta(\dot\gamma)\big(1+\beta(B)\big)\]
	
	As $\beta(B)=g(\gamma,\dot\gamma)[B,B]\geq 0$, we get  $\frac{d}{ds}\beta(\dot\gamma)=0$ and hence
	$\gamma$ satisfy the equation $D_{\dot\gamma}\dot\gamma=0$. This~implies that the spray coefficients of $G$ are quadratic in the velocities and then $G$ is Berwald.
	Let us now prove that also $G+\beta$ is Berwald. To~this end, let us compute the  variation of the length functional $\ell$ of  $G+\beta$. As~above, let us consider a smooth regular curve $\gamma$. Since $\ell$ is invariant under orienting preserving reparametrization, we can assume that $\gamma$ is parametrized w.r.t the arch length of $G$, i.e.,~$G(\gamma,\dot\gamma)=1$. Let $l$ be the length of $\gamma$ w.r.t. $G$. Arguing as above, the~first variation of $\ell$ at $r=0$ is equal to\vspace{-3pt}
	\[\int_0^l\beta(D_\gamma V)d s+\int_0^l\Big(g(\gamma,\dot\gamma)[\dot\gamma, D_{\dot\gamma}V]-g(\gamma,\dot\gamma)[B,V]\frac{d}{ds}\beta(\dot\gamma)\Big)ds.\] 
	
	The first integral above is equal to $ \int_0^l\frac{d}{ds}\beta(V)d s$ and hence it vanishes for all variational vector fields $V$. Therefore, the~critical points of $\ell$ parametrized w.r.t. to the arc length of $G$ satisfies  \eqref{fakeeq} and then, as~above, they do satisfy equation $D_{\dot\gamma}\dot\gamma=0$. Since $\frac{d}{ds}\beta(\dot\gamma)=0$, $\beta(\dot\gamma)=\mathrm{const.}$, i.e.,~$\gamma$ is also affinely parametrized for the metric $G+\beta$, hence it is a geodesic of this metric.   This implies that $D_{\dot\gamma}\dot\gamma=0$ is also the geodesics equation of $G+\beta$ and the  spray coefficients of this metric  are equal to the ones of $F/\sqrt{\Lambda}$. As~a consequence, $G+\beta$ is Berwald and its Finsler Ricci scalar vanishes because it is equal to the Finsler Ricci scalar of $F/\sqrt{\Lambda}$ (recall \eqref{Rij}).
\end{proof}
\begin{Remark}
	A similar proof shows that  $D\beta=0$ is also a sufficient condition for a Randers variation of a Finsler metric $F$ of Berwald type (i.e., for~a Finsler metric of the type $F+\beta$ with  $F(x,y)+\beta_x(y)>0$ for all $(x,y)\in TM\setminus 0$)   to be Berwald as well. This extends beyond the case that $F$ is Riemannian the sufficient condition in \cite{BaChSh00}, Th. 11.5.1.  
\end{Remark}
\section{Conclusions}
We have reviewed the mathematical definitions of a Finsler spacetime  and of a static timelike Killing vector field on it,  based on a fundamental function $L$ with low regularity assumptions.  In~particular, we have relaxed the requirement in~\cite{HoPfVo19} about smoothness of  $L$ on the open cone sub-bundle defining admissible timelike future-oriented vectors, in~order to include static and stationary Finsler spacetimes that split as a product $\R\times M$. 
We have then considered Berwald static Finsler spacetime under the point of view of the
analyticity of the solutions of the Rutz's equation $\tilde R(x,y)=0$ (and then satisfying also the field equation  \eqref{fieldeq} proposed in~\cite{HoPfVo19}). 
%%``equation'' should be written without the capital letter at the beginning
We have obtained a partial result in this direction stating analyticity \br (in its harmonic coordinates) \er of any Riemannian metric whose Levi--Civita connection coincide with  the Chern connection of the Finsler metric on the base $M$. 
In particular, this holds for the metric \eqref{riemannian} obtained as an average of the fundamental tensor of the Finsler metric on the base $M$.

\br The existence of analytic solutions (in a fixed coordinate system) \er of the Rutz's equation has been recently obtained for Berwald Finsler pp-waves in~\cite{FusPab16}  introduced there (see also  \cite{GomMin18}, \S 4).    
The Berwald static case that we have considered is, on~the other hand, dynamical equivalent to the classical Lorentzian static case, at~least when the dynamic is governed by the  Rutz's equation. Nevertheless,  extending   Theorem~\ref{main} to more general classes of Finsler function $F$ seems difficult due to the lack of ellipticity and quasi-diagonality of the system of equations $R_{\br \alpha\beta \er}=R^l_{\br \alpha\er l\br \beta \er}=0$ (that could be considered instead of the scalar equation $R=0$, see~\cite{Rutz93}, \S 3), even writing it in harmonic coordinates w.r.t. the horizontal Laplacian  of $F$, see~\cite{CapMas19}, Remark~5. \br From this point of view, it might be interesting to analyse  a generalization  of the  Einstein field equations  on the whole  tangent bundle of the spacetime, obtained recently~\cite{TriSta18}, based on Sasaki type metrics and  nonlinear connections on it. \er

\vspace{6pt} 	 	
%\end{itemize}

%=====================================
% References, variant B: external bibliography
%=====================================
%\externalbibliography{yes}
%\bibliography{../../mybib}

%%%%%%%%%%%%%%%%%%%%%%%%%%%%%%%%%%%%%%%%%%
%% optional
%\sampleavailability{Samples of the compounds ...... are available from the authors.}

%% for journal Sci3
%\reviewreports{\\
%Reviewer 1 comments and authors’ response\\
%Reviewer 2 comments and authors’ response\\
%Reviewer 3 comments and authors’ response
%}

%%%%%%%%%%%%%%%%%%%%%%%%%%%%%%%%%%%%%%%%%%

\end{document}